\newtheorem{theorem}{Theorem}
\newtheorem{lemma}[theorem]{Lemma}
\newtheorem{prob}{Problem}
\newtheorem{conj}{Conjecture}
\DeclareFixedFont{\ttb}{T1}{txtt}{bx}{n}{12} 
\DeclareFixedFont{\ttm}{T1}{txtt}{m}{n}{12}  
\definecolor{deepblue}{rgb}{0,0,0.5}
\definecolor{deepred}{rgb}{0.6,0,0}
\definecolor{deepgreen}{rgb}{0,0.5,0}
\newcommand\pythonstyle{\lstset{
tabsize=1,
language=Python,
basicstyle=\ttm,
otherkeywords={sage},             
keywordstyle=\ttb\color{deepblue},
emph={True},          
emphstyle=\ttb\color{deepred},    
stringstyle=\color{deepgreen},
frame=tb,                         
showstringspaces=false            %
}}
\newcommand\pythoninline[1]{{\pythonstyle\lstinline!#1!}}
\author{
Jernej Azarija \\
Institute of Mathematics, Physics and Mechanics \\
Jadranska 19, 1000 Ljubljana, Slovenia \\
jernej.azarija@imfm.si
}
\date{\today}
\title{Tutte polynomials and a stronger version of the Akiyama-Harary problem} 
\begin{document}
\maketitle

\begin{abstract}
Can a non self-complementary graph have the same chromatic polynomial as its complement? The answer to this question of Akiyama and Harrary is positive and was given by J. Xu and Z. Liu. They conjectured that every such graph has the same degree sequence as its complement. In this paper we show that there are infinitely many graphs for which this conjecture does not hold. We then solve a more general variant of the Akiyama-Harary problem by showing that there exists infinitely many non self-complementary graphs having the same Tutte polynomial as their complements.
\end{abstract}

\noindent
{\bf Keywords: graph complement, chromatic number, chromatic polynomial, Tutte polynomial} 

\noindent {\bf AMS Subj. Class. (2010)}: 05C31, 05C76  

\section{Introduction}

Let $p_G(k)$ be the {\em chromatic polynomial} of a simple graph $G$ that is $p_G(k)$ is the number of proper $k$-colorings of $G.$ In 1980 Akiyama and Harary \cite{AKH} raised the following question `Is there a graph $G$ that is not self-complementary and has a chromatic polynomial that equals to the chromatic polynomial of $\overline{G}$?' Observe that since $p_G(k)$ encodes the number of edges of $G$ a necessary conidition for a graph to have the posed property is that it has precisely ${|V(G)| \choose 2}/{2}$ edges.

The question recived little attention until 1995 when J. Xu and Z. Liu \cite{Xu} showed that such a graph indeed exists. They have shown that for any $n \geq 8$ congurent to $0$ or $1$ modulo $4$ there exists a graph $G$ of order $n$ such that $G$ is not self-complementary and $p_G(k) = p_{\overline{G}}(k).$ In their paper they constructed graphs with a specific degree sequence and then used the degree sequence to compute the chromatic polynomial of the coresponding graph. Given the nature of their construction they posed 

\begin{conj}[J. Xu, Z. Liu]
If a graph $G$ has the property that $p_{G}(k) = p_{\overline{G}}(k)$ then $G$ has the same degree sequence as $\overline{G}.$
\end{conj}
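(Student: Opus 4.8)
The plan is to refute this conjecture by exhibiting infinitely many graphs $G$ with $p_G(k)=p_{\overline G}(k)$ for which the degree sequence of $G$ differs from that of $\overline G$. Observe first that distinct degree sequences already force $G\not\cong\overline G$, so every such $G$ is automatically non self-complementary; a single construction therefore both contradicts the conjecture and yields a fresh positive answer to the original Akiyama--Harary question. To make chromatic equality tractable I would pass to the falling-factorial basis and work with the \emph{$\sigma$-polynomial} $\sigma(G,x)=\sum_i a_i x^i$, where $a_i$ counts the partitions of $V(G)$ into $i$ nonempty independent sets; since $p_G(k)=\sum_i a_i\,k(k-1)\cdots(k-i+1)$ and this change of basis is invertible, the target condition becomes $\sigma(G,x)=\sigma(\overline G,x)$. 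The advantage is that $\sigma$ is multiplicative under the join, $\sigma(G_1+G_2,x)=\sigma(G_1,x)\sigma(G_2,x)$, while the join is exactly the complement-dual of disjoint union, $\overline{G_1+G_2}=\overline{G_1}\cup\overline{G_2}$, which is the structure one wants when comparing a graph with its complement.

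Before searching it is worth locating the ``slack'' a counterexample must exploit. The equality $p_G=p_{\overline G}$ forces the edge counts to agree, hence $m=\binom{n}{2}/2$ and $n\equiv 0,1\pmod 4$. A short computation gives $\binom{d}{2}-\binom{n-1-d}{2}=\tfrac12(n-2)(2d-n+1)$, so summing over the degrees yields $\sum_i\binom{d_i}{2}-\sum_i\binom{n-1-d_i}{2}=\tfrac12(n-2)\bigl(4m-n(n-1)\bigr)=0$ once $m=\binom{n}{2}/2$. Thus $G$ and $\overline G$ automatically agree in their first two degree moments $\sum_i d_i=2m$ and $\sum_i\binom{d_i}{2}$, i.e.\ in edge count and in the number of paths $P_3$. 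A counterexample must therefore have $\{d_i\}\neq\{\,n-1-d_i\,\}$ as multisets even though these multisets have equal sum and equal sum of squares; this matches the fact that the chromatic polynomial detects the number of edges and of triangles but not the full degree sequence, and it explains simultaneously why the conjecture is tempting and where it can fail.

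With this in mind I would first obtain a small \emph{seed} by computer search (the $\sigma$-polynomial test is cheap), scanning graphs of order $n\in\{8,9,12,13,\dots\}$ and retaining any $G_0$ with $\sigma(G_0,x)=\sigma(\overline{G_0},x)$ and $\deg G_0\neq\deg\overline{G_0}$. To turn a seed into an infinite family I would use a complement-commuting product, namely the lexicographic product (substitution) $G_0[S]$ with $S$ ranging over a known infinite supply of self-complementary graphs, exploiting $\overline{G_0[S]}=\overline{G_0}[\overline S]$ together with $S\cong\overline S$. Since the degrees of a lexicographic product are $\deg_{G_0[S]}(g,h)=|V(S)|\deg_{G_0}(g)+\deg_S(h)$, one can propagate the seed's degree-sequence asymmetry to every member of the family and check that it persists.

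The hard part is the global polynomial identity: proving that the seed's local coincidence $\sigma(G_0)=\sigma(\overline{G_0})$ survives the product, i.e.\ that $p_{G_0[S]}=p_{\overline{G_0[S]}}$ for every self-complementary $S$ (and, toward the paper's stronger Tutte-polynomial statement, that $T_{G_0[S]}=T_{\overline{G_0[S]}}$). This does not follow from multiplicativity alone, since neither the chromatic nor the Tutte polynomial is multiplicative under substitution. I expect to control it through a deletion--contraction or clique-sum expansion of $G_0[S]$ that expresses both sides in terms of $\sigma(G_0,\cdot)$ and invariants of $S$ alone, so that equality on the seed forces equality after substitution. Establishing that reduction, while verifying that the degree sequences remain distinct for infinitely many $S$, is where the real work lies.
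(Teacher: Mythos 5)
Your first step---locating a small seed graph $G_0$ with $p_{G_0}(k)=p_{\overline{G_0}}(k)$ but $\deg G_0\neq\deg\overline{G_0}$ by computer search---is exactly how the paper refutes the conjecture: its Lemma \ref{L0} exhibits a $9$-vertex graph (graph6 string \verb+HCpVdZY+) with degree sequences $(5,5,5,4,4,4,4,3,2)$ versus $(6,5,4,4,4,4,3,3,3)$ and a common chromatic polynomial. Since a single such graph already falsifies the statement, that part of your plan is sound (modulo actually producing and verifying the seed), and your observation that $\sum_i\binom{d_i}{2}=\sum_i\binom{n-1-d_i}{2}$ is forced once $m=\binom{n}{2}/2$ is a genuinely useful remark the paper does not make: it explains why the degree-sequence discrepancy can only show up in the third and higher moments.

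The gap is in your amplification step, and it is not a detail you can defer: the claim that $p_{G_0}=p_{\overline{G_0}}$ implies $p_{G_0[S]}=p_{\overline{G_0}[\overline{S}]}$ for self-complementary $S$ is exactly the kind of statement that fails, because the chromatic polynomial of a lexicographic product is \emph{not} a function of the chromatic polynomials of its factors. Coloring $G_0[S]$ amounts to assigning to each vertex of $G_0$ a set of colors, with disjointness required across edges of $G_0$; this ``set-coloring'' data is strictly finer than $p_{G_0}$, as the paper's own cone example ($K_{1,3}$ and $P_4$ are chromatically equivalent but their cones, i.e.\ their joins with $K_1$, are not Tutte-equivalent) illustrates for the closely related join operation. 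So the reduction you ``expect to control'' is the entire content of the infinite-family claim, and as stated it would fail for a generic seed. The paper's way around this is a different complement-commuting construction: $\widehat{G}$ is obtained by taking a disjoint path $P$ on four vertices and joining \emph{every} vertex of $G$ to \emph{both} endpoints of $P$. Because those two endpoints dominate $G$, any proper $k$-coloring restricted to $G$ is a proper coloring with $k-1$ or $k-2$ colors, which yields the closed form $p_{\widehat{G}}(k)=k(k-1)\bigl((k-2)p_G(k-1)+(k(k-3)+3)p_G(k-2)\bigr)$; thus $p_{\widehat{G}}$ depends on $G$ only through $p_G$, while $\overline{\widehat{G}}=\widehat{\overline{G}}$ and the degree sequences visibly stay distinct. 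If you want to keep the lexicographic product, you would need to replace chromatic equivalence of the seed by equivalence of a finer invariant (something like equality of all set-coloring counts, in the spirit of the paper's subgraph-sequence condition $s(G)=s(\overline{G})$ used for the Tutte case) and prove that \emph{that} is preserved by substitution.
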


As it turns out, their conjecture is false. In this paper we present an infinite family of graphs not adhering to this condition. 

Finally we turn our attention to a more general variant of the problem introduced by Akiyama and Harary. For a subset $F \subseteq E(G)$ we denote by $c(F)$ the number of connected components of the graph with edge set $F$ and vertex set $V(G).$ With this in mind the {\em Tutte polynomial} of a graph $G$ is defined as \begin{equation} \label{TutteEq} T_G(x,y) = \sum_{F \subseteq E(G)} (x-1)^{c(F) - c(E)} \cdot (y-1)^{c(F)+|F|-|V(G)|} .\end{equation}

The Tutte polynomial $T_{G}(x,y)$ contains much more information about the structure of $G$ than $p_{G}(k)$ does. Indeed, it is well known that $$p_G(k) = (-1)^{|V(G)|-k(E)}k^{c(E)}T_{G}(1-k,0).$$ Among the many other interesting evaluations of the Tutte polynomial are $T_G(1,1)$ - the number of spanning trees of $G$ and $T_G(2,0),T_G(0,2)$ the number of cyclic and acycic orientations of $G$ respectively. For a survey of known results about the Tutte polynomial see \cite{SomeTutteSurvey}. 

A natural generalization of the Harary-Akiyama question following from these properties of the Tutte polynomial is, wheter there exists non self-complementary graphs having the same Tutte polynomial as their complement. In this paper we shall prove

\begin{theorem}
There exists infinitely many graphs that are not self-complementary and have the same Tutte polynomial as their complement.
\end{theorem}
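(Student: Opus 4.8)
The plan is to reduce the ``infinitely many'' statement to producing a single seed graph together with a property-preserving amplification. Concretely, I would look for a graph operation $\Phi$ that simultaneously (i) commutes with complementation up to isomorphism, $\overline{\Phi(G)} \cong \Phi(\overline{G})$, (ii) respects Tutte-equivalence, i.e. $T_{G} = T_{H} \Rightarrow T_{\Phi(G)} = T_{\Phi(H)}$, and (iii) strictly increases the number of vertices while keeping the graph non-self-complementary. Given such a $\Phi$ and a single non-self-complementary $G_0$ with $T_{G_0} = T_{\overline{G_0}}$, property (ii) applied to the pair $(G_0, \overline{G_0})$ gives $T_{\Phi(G_0)} = T_{\Phi(\overline{G_0})}$, while property (i) gives $T_{\Phi(\overline{G_0})} = T_{\overline{\Phi(G_0)}}$; hence $\Phi(G_0)$ again has the desired property, and iterating $\Phi$ produces an infinite family.

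For condition (i) the natural candidates are operations built to be compatible with complement, such as the lexicographic product $G \mapsto G[H]$ with $H$ a fixed self-complementary graph (for instance $P_4$ or $C_5$), since $\overline{G[H]} = \overline{G}[\overline{H}] \cong \overline{G}[H]$. The seed graph $G_0$ I would obtain by a computer search: since the Tutte polynomial fixes the edge count, a necessary condition is again $n \equiv 0,1 \pmod 4$ (the only orders admitting $\binom{n}{2}/2$ edges), so I would enumerate graphs on such $n$, compute $T_{G}$ and $T_{\overline{G}}$ for each candidate, and retain those with $T_{G} = T_{\overline{G}}$ but $G \not\cong \overline{G}$. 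Non-self-complementarity of $\Phi(G_0)$ would then be argued from a structural asymmetry that survives the operation, e.g. distinguishing $G_0[H]$ from $\overline{G_0}[H]$ through the factor decomposition. Note that every such example is automatically an example for the original chromatic problem, since $p_G$ is recoverable from $T_G$.

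The hard part will be condition (ii). Complementation interacts badly with every standard Tutte-friendly mechanism: deletion--contraction, multiplicativity over disjoint unions and over blocks, and matroid duality all fail to commute with $G \mapsto \overline{G}$, precisely because complementation is a global, dense operation. In particular, the Tutte polynomial of a lexicographic product is in general \emph{not} a function of the Tutte polynomials of its factors, so the clean amplification above is not guaranteed to respect Tutte-equivalence. I therefore expect the crux to be either (a) isolating a special operation for which Tutte-functoriality provably holds, or, more likely, (b) abandoning functoriality altogether and exhibiting an explicit parametrized family $\{G_n\}$ for which \emph{both} $T_{G_n}$ and $T_{\overline{G_n}}$ can be put in closed form and shown equal. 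The central difficulty in route (b) is that the complement destroys the recursive structure that makes $T_{G_n}$ computable, so the family must be engineered — presumably guided by the patterns found in the computer search — so that its complements remain equally tractable, with non-isomorphism certified by an invariant (such as the degree sequence) that the Tutte polynomial does not determine.
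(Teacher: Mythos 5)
Your overall architecture --- a seed graph plus an amplification operation $\Phi$ satisfying $\overline{\Phi(G)}\cong\Phi(\overline{G})$ and preserving the relevant equivalence --- is exactly the skeleton of the paper's argument, and you correctly locate the crux in your condition (ii): no standard operation makes $T_{\Phi(G)}$ a function of $T_G$ alone (the paper makes the same point with the cones of $P_4$ and $K_{1,3}$, which are Tutte-equivalent trees whose cones have $21$ and $20$ spanning trees respectively). But your proposal stops at naming this difficulty rather than resolving it: route (a) is left as ``isolate a special operation for which Tutte-functoriality provably holds'' without a candidate that works (the lexicographic product you suggest fails, as you concede), and route (b) --- closed forms for $T_{G_n}$ and $T_{\overline{G_n}}$ --- is not carried out and is likely intractable for exactly the reason you give. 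So as it stands there is a genuine gap: the theorem is not proved.

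The missing idea is to \emph{strengthen the invariant} rather than weaken the operation. Define for each spanning subgraph $H\subseteq E(G)$ its description $(|E(H)|,h_1,\dots,h_k)$, where the $h_i$ are the orders of its components, and let $s(G)$ be the sorted list of all such descriptions. By the subset expansion of $T_G(x,y)$, $s(G)=s(H)$ implies $T_G=T_H$, so it suffices to find a non-self-complementary seed with $s(G)=s(\overline{G})$ (a computer search finds one on $8$ vertices) and an operation preserving equality of $s$. The paper's operation $\widehat{G}$ --- attach a disjoint $4$-path and join every vertex of $G$ to both of its endpoints --- satisfies $\overline{\widehat{G}}=\widehat{\overline{G}}$, and it preserves $s$ because a spanning subgraph of $\widehat{G}$ is a spanning subgraph of $G$ plus a choice of path edges and of edges from the two path-endpoints into the components of $G$; a bijection matching subgraphs of $G$ and of $H$ with equal descriptions extends component-by-component to a description-preserving bijection of their extensions. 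This is precisely the functoriality your plan needs, but at the level of $s(G)$ rather than $T_G$; with it, your induction goes through verbatim. Your remaining side points (the necessary condition $n\equiv 0,1\pmod 4$, and the observation that any such example also answers the chromatic question) are correct.
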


\section{Chromatic polynomials and graph complements}

In this section we present a family of graphs having equal chromatic polynomials as their complements but different degree sequence. We start with the graph depicted on Figure \ref{G1} together with its complement. Its {\em graph6} string \cite{McKay} is \verb+HCpVdZY+. First, we establish that $G$ has the desired properties.

\begin{lemma} \label{L0}
There exists a graph $G$ of order 9 such that $G$ and $\overline{G}$ have different degree sequences but $p_{G}(k) = p_{\overline{G}}(k).$
\end{lemma}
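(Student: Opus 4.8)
The statement is an existence claim, so the plan is to exhibit one explicit graph and verify the two required properties directly. I would take $G$ to be the order-$9$ graph of Figure~\ref{G1}, specified unambiguously by the \emph{graph6} string \verb+HCpVdZY+. Since any graph with the target property must have exactly $\binom{9}{2}/2 = 18$ edges, the first sanity check is that $G$ indeed has $18$ edges, whence $\overline{G}$ does too. Everything afterwards is verification on this fixed graph.

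The easy half is the degree sequences. I would read the degree of each vertex off the adjacency structure of $G$; the degree sequence of $\overline{G}$ is then obtained vertexwise by $v \mapsto 8 - \deg_G(v)$. Displaying the two multisets side by side, it suffices to observe that they are not equal as multisets, and already a single degree value occurring with a different multiplicity in $G$ than in $\overline{G}$ settles this half. This is the step that refutes the Xu--Liu conjecture for $G$, and it is purely mechanical.

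The substance of the lemma is the identity $p_G(k) = p_{\overline{G}}(k)$. Here I would compute both chromatic polynomials and check that the two degree-$9$ polynomials agree coefficient by coefficient. A convenient way to organize the bookkeeping is the Whitney rank (subset) expansion $p_G(k) = \sum_{S \subseteq E(G)} (-1)^{|S|} k^{c(S)}$, where $c(S)$ is the number of connected components of the spanning subgraph $(V(G),S)$; equality of the chromatic polynomials is then equivalent to the assertion that, for each $j$, the signed count $\sum_{S : c(S)=j} (-1)^{|S|}$ is the same for $G$ and for $\overline{G}$. In practice the cleanest route is repeated deletion--contraction (or an equivalent automated computation, which the \emph{graph6} encoding invites), and I would record the resulting common polynomial explicitly so that the reader can confirm the coincidence by inspection.

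The main obstacle is computational rather than conceptual: a $9$-vertex graph with $18$ edges has a chromatic polynomial whose hand computation via deletion--contraction branches heavily, and the genuine content is the verification that $p_G$ and $p_{\overline{G}}$ coincide even though the two graphs are non-isomorphic and carry different degree sequences. There is no shortcut through symmetry here, precisely because the example is designed so that the usual degree-sequence obstruction is absent; I therefore expect to rely on a careful or machine-checked evaluation of both polynomials, after which the lemma follows immediately from the two explicit expressions.
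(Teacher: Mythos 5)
Your proposal matches the paper's proof essentially verbatim: it exhibits the same explicit graph (graph6 string \verb+HCpVdZY+), checks that the degree sequences $(5,5,5,4,4,4,4,3,2)$ and $(6,5,4,4,4,4,3,3,3)$ differ, and verifies $p_G(k)=p_{\overline{G}}(k)$ by deletion--contraction or a machine computation, recording the common polynomial explicitly. The Whitney rank expansion you mention is just an alternative bookkeeping for the same verification, so there is nothing substantively different here and the argument is correct.
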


\begin{proof}
We observe that the graph $G$ from Figure \ref{G1} has degree sequence $(5, 5, 5, 4, 4, 4, 4, 3, 2)$ while its complement has degree sequence $(6, 5, 4, 4, 4, 4, 3, 3, 3).$ Using the well known {\em deletion-contraction} recurrence for computing the chromatic polynomial of a graph we can verify that $$p_{G}(k) = p_{\overline{G}}(k) = (k - 2) \cdot (k - 1) \cdot k \cdot (k - 3)^{2} \cdot (k^{4} - 9k^{3} + 35k^{2} - 69k + 57).$$ Alternatively we can verify the stated claim using the Sage program presented in the Appendix.
\end{proof}

\begin{figure}
\centering
\includegraphics[scale=0.55]{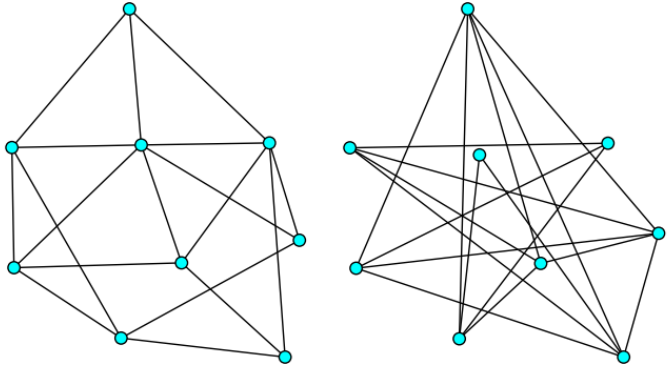}
\caption{A graph and its complement.} \label{G1}
\end{figure}

Before showing the main claim of this section, we introduce a useful construction. Given a graph $G$ we form the graph $\widehat{G}$ by taking a vertex disjoint $4$-path $P$ and joining every vertex of $G$ to both endpoints of $P.$ Conveniently, we have $\overline{\widehat{G}} = \widehat{\overline{G}}.$ Using this property it is not difficult to establish the following claim.

\begin{theorem} \label{T1}
There exists infinitely many graphs $G$ not having the same degree sequence as $\overline{G}$ but having the same chromatic polynomial as their complements. 
\end{theorem}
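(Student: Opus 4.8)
The plan is to build an infinite family from the single seed graph $G$ of order $9$ supplied by Lemma~\ref{L0} by iterating the $\widehat{\phantom{G}}$ construction. Writing $\widehat{G}^{(0)} = G$ and $\widehat{G}^{(i+1)} = \widehat{\widehat{G}^{(i)}}$, the key structural fact to exploit is the identity $\overline{\widehat{H}} = \widehat{\overline{H}}$ stated just before the theorem. By induction this yields $\overline{\widehat{G}^{(i)}} = \widehat{\overline{G}^{(i)}}$ for every $i$, so each member of the family is the $\widehat{\phantom{G}}$-image of a graph whose complement is already understood. The orders strictly increase (each application adds the four vertices of the path $P$), so the graphs produced are pairwise non-isomorphic and the family is genuinely infinite.

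Next I would verify that the two required properties are inherited under a single application of $\widehat{\phantom{G}}$, and then lift them to the whole family by induction. For the chromatic polynomial, the point is that $\widehat{H}$ is obtained from $H$ by a fixed gadget (a $4$-path $P$ together with a complete join from every vertex of $H$ to the two endpoints of $P$) that is attached in exactly the same combinatorial way to $H$ as to $\overline{H}$; since the gadget interacts with the host only through the two apex vertices joined to all of $V(H)$, a deletion--contraction (or join/counting) argument expresses $p_{\widehat{H}}(k)$ as a universal function of $p_H(k)$ and $k$. Concretely I would argue that $p_H = p_{\overline H}$ forces $p_{\widehat H} = p_{\widehat{\overline H}} = p_{\overline{\widehat H}}$, so the chromatic-polynomial equality propagates. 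For the degree sequences, I would simply track how $\widehat{\phantom{G}}$ transforms the degrees: every original vertex of $H$ gains $+2$ (from the two endpoints of $P$), while the four path vertices receive prescribed degrees depending on $|V(H)|$ and their position on $P$. Since the degree sequences of $H$ and $\overline H$ differ for the seed, and the transformation applies the same shift to both, a short check shows the difference is preserved, so $\widehat{G}^{(i)}$ and its complement continue to have distinct degree sequences.

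The base case is furnished entirely by Lemma~\ref{L0}: $G$ has order $9$, satisfies $p_G = p_{\overline G}$, and has a degree sequence different from that of $\overline G$. The induction step is precisely the two inheritance claims above, giving the result.

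The main obstacle I anticipate is making the chromatic-polynomial inheritance watertight, i.e.\ proving rigorously that $p_{\widehat H}$ depends on $H$ only through $p_H$ (and $k$). The join to both endpoints of $P$ creates many edges and the analysis is not a pure graph join, so one must handle the colour constraints imposed by the two apex vertices carefully --- partitioning over the colours assigned to the two endpoints of $P$ and to the internal path vertices, and then summing proper colourings of $H$ that avoid the forbidden apex colours, which reduces to evaluations $p_H(k-1)$ and $p_H(k-2)$. Verifying that this bookkeeping yields an expression symmetric in $H \leftrightarrow \overline H$ whenever $p_H = p_{\overline H}$ is the delicate step; the degree-sequence and infinitude parts are essentially routine once the construction is fixed.
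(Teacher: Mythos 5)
Your proposal is correct and follows essentially the same route as the paper: compute $p_{\widehat{H}}(k)$ as a universal function of $p_H(k)$ by conditioning on the colours of the endpoints and internal vertices of $P$ (reducing to $p_H(k-1)$ and $p_H(k-2)$), use $\overline{\widehat{H}}=\widehat{\overline{H}}$ to propagate the chromatic-polynomial equality, note that the uniform degree shift preserves the degree-sequence discrepancy, and iterate from the Lemma~\ref{L0} seed. The only difference is that the paper carries out the colour bookkeeping explicitly, obtaining $p_{\widehat{G}}(k)=k(k-1)\bigl((k-2)p_G(k-1)+(k(k-3)+3)p_G(k-2)\bigr)$, which is exactly the computation you flag as the remaining delicate step.
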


\begin{proof}   
We compute the chromatic polynomial of $\widehat{G}.$  Suppose we wish to properly color $\widehat{G}$ with $k$ colors. Let $x,y$ be the endpoints of the $4$-path $P$  introduced in $\widehat{G}$ and let $x',y'$ be the respective neighbors of $x$ and $y$ in $P.$ There are essentialy two different ways to color $\widehat{G}.$ If we color $x,y$ with equal colors then there are $(k-1)$ choices to color $x'$ and $(k-2)$ colors to color $y'$ and hence $k(k-1)(k-2) p_G(k-1)$ ways to properly $k$-color $\widehat{G}.$ If $x,y$ are colored with different colors then we again have two cases. If $y'$ is colored with the same color as $x$ then we have $k(k-1)^2 p_G(k-2)$ total ways to color $\widehat{G}.$ If however $y'$ is not colored with the same color as $x$ we end up having $k(k-1)(k-2)^2 p_G(k-2)$ ways to propery color our graph using $k$ colors. Summing up the obtained quantities we infer \begin{eqnarray} p_{\widehat{G}}(k) & = & k(k-1)(k-2) p_G(k-1)+ k(k-1)^2 p_G(k-2) + k(k-1)(k-2)^2 p_G(k-2) \nonumber \\
       & = & k(k-1)( (k-2) p_G(k-1) + (k(k-3)+3)p_G(k-2)) \nonumber . \end{eqnarray}
In particular we see from the above expression that $\widehat{G}$ is in fact a function of $p_G(k).$  The main claim now follows quickly with an inductive argument. By Lemma 2 we have a graph $G$ of order 9 having a different degree sequence than $\overline{G}$ but the same chromatic polynomial. But then the degree sequences of $\widehat{G}$ and $\overline{\widehat{G}}$ differ while for their chromatic polynomials the above identity implies \begin{eqnarray} p_{\widehat{G}}(k) & = & k(k-1)( (k-2) p_G(k-1) + (k(k-3)+3)p_G(k-2)) \nonumber \\ & = & k(k-1)( (k-2) p_{\overline{G}}(k-1) + (k(k-3)+3)p_{\overline{G}}(k-2)) \nonumber \\ &  = & p_{\widehat{\overline{G}}}(k) = p_{\overline{\widehat{G}}}(k). \nonumber \end{eqnarray} Hence by using this construction iteratively we obtain an infinite family of graphs with the stated property.
\end{proof}

Making a computer search it can be seen that there are graphs on 12 vertices that have the property stated in Theorem \ref{T1}. Hence it is easy to extend the proof of Theorem \ref{T1} to show that for any $n \geq 9$ congurent to $0$ or $1$ $\pmod{4}$ there exist a graph $G$ not having the same degree sequence as $\overline{G}$ but sharing the same chromatic polynomial.

\section{The Tutte polynomial}

A very useful property of the chromatic polynomial that we exploited in the proof of Theorem \ref{T1} is the fact that the chromatic polynomial of a graph operation is often a function of the chromatic polynomials of its operands. Unfortunately the same is not generally true for the Tutte polynomial. Indeed, consider two trees of order $4$, the star graph $K_{1,3}$ and the path graph $P_4.$ Both have the same Tutte polynomial namely $x^3.$ Consider now their {\em cone graph}, that is the graph obtained by adding a new vertex and joining it to all other vertices. The cone of $K_{1,3}$ has $20$ spanning trees while while the cone of $P_4$ has $21$ spanning trees. Hence the Tutte polynomials of the cones of $K_{1,3}$ and $P_4$ are different. 

In order to apply the construction introduced in the previous section, we need an additional structure of our graphs that will assure that if two graphs $G$ and $H$ have equal Tutte polynomials then so do $\widehat{G}$ and $\widehat{H}.$

As it turns out, the following concept is quite useful for this purpose. Let $H$ be a spanning subgraph of $G$ having connected components of order $h_1 \geq h_2 \geq \cdots \geq h_k.$ We say that $(|E(H)|,h_1,h_2, \ldots, h_k)$ is a {\em subgraph description} of $H.$ Let now $s(G)$ be the lexicographically sorted tuple of subgraph descriptions for every subgraph of $G.$ We call $s(G)$ the {\em subgraph sequence} of $G.$
Observe that equation \ref{TutteEq} implies that if two graphs have the same subgraph sequence then they also have the same Tutte polynomial. The converse is of course not true as witnessed by the above example with $P_4$ and $K_{1,3}.$ Our next lemma asserts that the property of having the same subgraph sequence is preserved by the construction introduced in the previous section.

\begin{lemma} \label{L1}
If $G$ and $H$ are graphs such that $s(G) = s(H)$ then $s(\widehat{G}) = s(\widehat{H}).$
\end{lemma}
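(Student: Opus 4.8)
The plan is to prove that $s(\widehat{G})$ is produced from $s(G)$ by a rule that does not see anything about $G$ beyond its subgraph descriptions, so that $s(G)=s(H)$ forces $s(\widehat{G})=s(\widehat{H})$. Write the attached $4$-path as $P = x\,x'\,y'\,y$, with endpoints $x,y$ (each joined to all of $V(G)$) and interior vertices $x',y'$; its edge set is $\{xx',x'y',y'y\}$. Every spanning subgraph $\widehat{F}$ of $\widehat{G}$ is encoded uniquely by a quadruple $(F,Q,S_x,S_y)$, where $F\subseteq E(G)$ is the set of retained $G$-edges, $Q\subseteq\{xx',x'y',y'y\}$ the retained path edges, and $S_x,S_y\subseteq V(G)$ record which join-edges $vx$, $vy$ are retained. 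Since distinct quadruples give distinct subgraphs and every subgraph so arises, the subgraph sequence splits as a multiset union
$$ s(\widehat{G}) \;=\; \biguplus_{F\subseteq E(G)} M(F), \qquad M(F) := \{\, \mathrm{desc}(\widehat{F}) : Q,\,S_x,\,S_y \,\}, $$
where $\mathrm{desc}(\cdot)$ is the subgraph description. It therefore suffices to show that $M(F)$ depends only on $\mathrm{desc}(F)=(|F|,c_1,\dots,c_k)$, where $c_1\ge\cdots\ge c_k$ are the component orders of $(V(G),F)$.

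To establish this I would analyse how $\widehat{F}$ is assembled from $F$. Its edge count is $|\widehat{F}|=|F|+|Q|+|S_x|+|S_y|$, so the first coordinate of $\mathrm{desc}(\widehat{F})$ sees $F$ only through $|F|$. For the component orders, observe that $x$ is merged with every component of $(V(G),F)$ that meets $S_x$, and $y$ with every component meeting $S_y$, while $x',y'$ attach along $P$ according to $Q$; a component is left untouched exactly when it meets neither $S_x$ nor $S_y$. Hence the multiset of component orders of $\widehat{F}$ is the multiset of orders of the untouched components, together with the one or two ``hub'' components grown around $\{x,x',y',y\}$, and the sizes of the latter depend only on $Q$ and on the three totals $N_A=\sum_{i\in A\setminus B}c_i$, $N_B=\sum_{i\in B\setminus A}c_i$, $N_{AB}=\sum_{i\in A\cap B}c_i$, where $A,B$ index the components met by $S_x,S_y$. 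Finally, for a fixed assignment of components to the four classes (met by $x$ only, by $y$ only, by both, by neither), the number of pairs $(S_x,S_y)$ realising it while contributing a given number of join-edges is read off from a product of factors $(1+t)^{c_i}-1$ over the hit components (squared on $A\cap B$) and $1$ over the untouched ones — a symmetric function of the multiset $\{c_i\}$. Summing over all class-assignments and all $Q$ shows $M(F)$ is an explicit symmetric function of $(|F|;c_1,\dots,c_k)$.

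Granting the claim the lemma is immediate: $s(G)=s(H)$ says precisely that the multisets $\{\mathrm{desc}(F):F\subseteq E(G)\}$ and $\{\mathrm{desc}(F'):F'\subseteq E(H)\}$ coincide, so applying the common rule $\mathrm{desc}\mapsto M$ term by term gives equal multiset unions, i.e.\ $s(\widehat{G})=s(\widehat{H})$. I expect the main obstacle to be the bookkeeping in the middle step: one must run through the eight choices of $Q$ and check in each case that the interior vertices $x',y'$ and the material around $x,y$ coalesce into components whose orders genuinely depend only on $N_A,N_B,N_{AB}$ (and on whether $x$ and $y$ end up in a common component, which happens exactly when $A\cap B\neq\emptyset$ or $Q$ is the whole path). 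This is routine but unforgiving, since a single miscounted path-configuration would destroy the symmetry on which the whole reduction rests.
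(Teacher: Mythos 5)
Your proposal is correct and follows essentially the same route as the paper: both group the spanning subgraphs of $\widehat{G}$ by their restriction $F$ to $G$ and show that the multiset of descriptions of the extensions of $F$ depends only on the subgraph description of $F$. The only difference is in how that key step is certified --- the paper exhibits a component-by-component bijection between extensions of description-matched subgraphs of $G$ and $H$ (sending the $i$'th vertex of the $j$'th component to its counterpart), whereas you compute the extension multiset directly as a symmetric function of $(|F|;c_1,\dots,c_k)$; both verifications are sound.
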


\begin{proof}
Let $G'$ be a spanning subgraph of $\widehat{G}.$ Observe that $G'$ is obtained by taking a spanning subgraph of $G$ with subgraph description $d = (|E(G')|, g_1,\ldots,g_k)$ adding the remaining four vertices of $\widehat{G}$ comming from the introduced $4$-path $P$ and finally adding some of the edges with at least one endpoint in $P.$ That is we add some of the edges of $P$ and then some of the edges from the endpoints of $P$ to some vertices of the connected components of $G.$

By assumption $G$ has the same subgraph sequence as $H$ hence there is a bijective mapping between their subgraph sequences. Let $H'$ be the subgraph of $H$ with subgraph sequence $d$ that is prescribed by such bijection. Since $H'$ and $G'$ have the same subgraph description there is bijective way to map every extension of $G'$ to a subgraph of $\widehat{G}$ to an extension of $H'$ to a subgraph of $\widehat{H}.$ Indeed, we may assume the vertices of $G$ and $H$ to be ordered and then for every edge that is added from one of the endpoints $x$ of $P$ to the the $i$'th vertex of the $j$'th component of $G$ we add the edge between $x$ and the $i$'th vertex of the $j$'th component of $H.$ This is always well defined since $H$ and $G$ have the same subgraph description.
\end{proof}

In order to apply Lemma \ref{L1} we need to find a non self-complementary graph $G$ such that $s(G) = s(\overline{G}).$ As already noted this immediately implies $T_{G}(x,y) = T_{\overline{G}}(x,y).$  One of the smallest graphs with such property has order $8$ and is presented on Figure \ref{G2}.  Its graph6 string is \verb+GCRdvK+ . 

\begin{figure}
\centering
\includegraphics[scale=0.55]{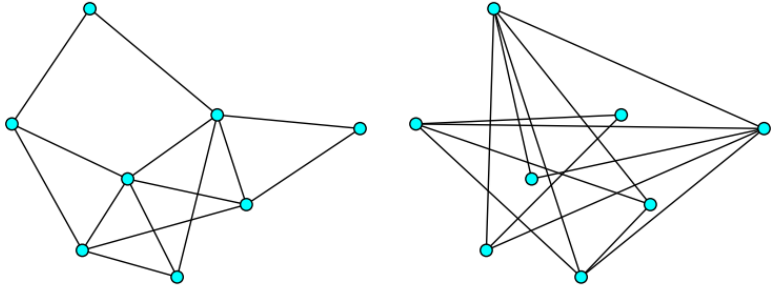}
\caption{A graph with equal Tutte polynomial as its complement.} \label{G2}
\end{figure}

\begin{lemma} \label{L2}
There exist a non self-complementary graph of order $8$ such that $s(G) = s(\overline{G}).$
\end{lemma}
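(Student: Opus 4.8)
The plan is to exhibit a specific graph and then verify the required property. Since the lemma asks for the existence of a single order-$8$ graph $G$ with $s(G) = s(\overline{G})$ that is not self-complementary, the natural approach is to display a concrete candidate and check the two conditions directly. The author has already named the graph via its graph6 string \verb+GCRdvK+ and depicted it in Figure \ref{G2}, so I would take this graph as $G$ and argue that it works.

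First I would verify that $G$ is not self-complementary. This is the easy half: it suffices to exhibit any isomorphism invariant on which $G$ and $\overline{G}$ differ. The cleanest choice is the degree sequence, since computing it requires only counting neighbors at each vertex; if $G$ and $\overline{G}$ have distinct degree sequences they cannot be isomorphic. (If the degree sequences happen to coincide, one instead distinguishes $G$ from $\overline{G}$ by a finer invariant such as the number of triangles or the sorted list of vertex neighborhood sizes.)

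The main work is the second condition, $s(G) = s(\overline{G})$. Recall that $s(G)$ is the lexicographically sorted tuple of subgraph descriptions over \emph{all} $2^{|E(G)|}$ spanning subgraphs of $G$, where each subgraph contributes the tuple $(|E|, h_1, h_2, \ldots)$ of its edge count together with its sorted component orders. Since $|E(G)| = \binom{8}{2}/2 = 14$, there are $2^{14}$ spanning subgraphs of $G$ and equally many of $\overline{G}$, so a hand computation is infeasible and I would instead carry this out by direct enumeration on a computer, exactly as the paper's Sage-based approach suggests. Concretely, the verification loops over every subset $F \subseteq E(G)$, records the subgraph description of $F$, sorts the resulting multiset, and checks it against the analogous multiset for $\overline{G}$; equality of these sorted tuples is precisely $s(G) = s(\overline{G})$.

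The hard part will be persuading the reader that this finite check is trustworthy, rather than the check itself, which is conceptually routine. To address this I would note that $s(G) = s(\overline{G})$ immediately yields $T_G(x,y) = T_{\overline{G}}(x,y)$ via equation \eqref{TutteEq}, so one can cross-validate the enumeration by independently computing the two Tutte polynomials and confirming they agree; any discrepancy would flag an error in the subgraph enumeration. I expect no genuine mathematical obstacle here, since the existence statement reduces entirely to the correctness of a bounded computation over a fixed $14$-edge graph, and I would simply cite the companion Sage code for the explicit verification.
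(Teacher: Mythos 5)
Your proposal follows essentially the same route as the paper: take the explicit graph \verb+GCRdvK+, certify non-self-complementarity by an isomorphism invariant, and delegate the verification of $s(G)=s(\overline{G})$ to a finite enumeration over all $2^{14}$ spanning subgraphs (the paper does exactly this with a Sage script in the appendix).

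One concrete correction, though: the specific invariants you propose for the ``easy half'' cannot succeed for this graph. The degree sequences of $G$ and $\overline{G}$ coincide here (both have two vertices of degree $2$; indeed the paper's closing remarks report that no example with differing degree sequences exists on fewer than $16$ vertices), so your ``cleanest choice'' fails, and your named fallbacks fail as well: the sorted list of vertex neighborhood sizes \emph{is} the degree sequence, and the number of triangles is forced to be equal by $s(G)=s(\overline{G})$ itself, since triangles are exactly the spanning subgraphs with description $(3,3,1,1,1,1,1)$ and this count is read off from the subgraph sequence. You need a genuinely finer invariant; the paper uses the observation that the two degree-$2$ vertices of $G$ share a common neighbor while those of $\overline{G}$ do not. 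This is easily repaired once you have the explicit graph in hand, but as written your plan for distinguishing $G$ from $\overline{G}$ would not terminate with any of the invariants you list.
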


\begin{proof}
Consider the graph $G$ from Figure \ref{G2}. Observe that $G$ and $\overline{G}$ both have two vertices of degree $2.$ In $G$ these two vertices share a common neighbor while the vertices of degree $2$ in $\overline{G}$ have no common neighbors. Hence $G$ and $\overline{G}$ are not isomorphic. To verify the second part of the claim, that is $s(G) = s(\overline{G}),$ is a tedious process hence we invite the reader to inspect Appendix \ref{AppA} presenting a Sage program verifying the claim.
\end{proof}

We are now ready to prove the main claim of this section.

\begin{theorem}\label{TT}
There exist infinitely many graphs $G$ such that $G \not \cong \overline{G}$ but $T_{G}(x,y) = T_{\overline{G}}(x,y).$
\end{theorem}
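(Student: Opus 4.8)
The plan is to iterate the hat-construction starting from the order-$8$ seed supplied by Lemma~\ref{L2}. Let $G_0$ be that graph and set $G_{i+1} = \widehat{G_i}$, so that $G_i$ has order $8 + 4i$. Because these orders are pairwise distinct, the $G_i$ are automatically pairwise non-isomorphic, and it suffices to prove that each individual $G_i$ is non-self-complementary while sharing its Tutte polynomial with its complement. The Tutte half of this is a one-line induction: Lemma~\ref{L2} gives $s(G_0) = s(\overline{G_0})$, and if $s(G_i) = s(\overline{G_i})$ then Lemma~\ref{L1} yields $s(\widehat{G_i}) = s(\widehat{\overline{G_i}})$, which by the identity $\overline{\widehat{G_i}} = \widehat{\overline{G_i}}$ is exactly $s(G_{i+1}) = s(\overline{G_{i+1}})$. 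Since equal subgraph sequences force equal Tutte polynomials (the observation following equation~\ref{TutteEq}), we obtain $T_{G_i}(x,y) = T_{\overline{G_i}}(x,y)$ for every $i$.

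The only genuinely non-routine point is to guarantee that the construction never collapses a non-self-complementary graph into a self-complementary one; that is, that $G \not\cong \overline{G}$ implies $\widehat{G} \not\cong \overline{\widehat{G}}$. I would prove the stronger statement that $G$ is recoverable from $\widehat{G}$ up to isomorphism, so that $\widehat{G} \cong \widehat{H}$ forces $G \cong H$; taking $H = \overline{G}$ and using $\overline{\widehat{G}} = \widehat{\overline{G}}$ then yields the contrapositive at once. Write the adjoined $4$-path as $x - x' - y' - y$, with $x,y$ the endpoints joined to every vertex of $G$. In $\widehat{G}$ the internal vertices $x'$ and $y'$ are adjacent and each has degree exactly $2$, while every vertex $v$ of $G$ has degree $\deg_G(v) + 2 \ge 2$, with equality only when $v$ is isolated in $G$; any such degree-$2$ vertex of $G$ is adjacent solely to $x$ and $y$, both of degree $|V(G)| + 1 > 2$. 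Hence $\{x', y'\}$ is the unique pair of mutually adjacent degree-$2$ vertices of $\widehat{G}$. From it one reads off $x$ as the neighbor of $x'$ other than $y'$, then $y$ as the neighbor of $y'$ other than $x'$, and finally recovers $G = \widehat{G} - \{x, x', y', y\}$. This reconstruction is canonical, so the hat-operation is injective on isomorphism classes.

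With the reconstruction in hand the induction closes: each $G_i$ inherits $G_i \not\cong \overline{G_i}$ from the fact that $G_0 \not\cong \overline{G_0}$ (Lemma~\ref{L2}), while $T_{G_i} = T_{\overline{G_i}}$ as shown above, and the distinct orders $8 + 4i$ provide infinitely many examples. I expect the reconstruction argument of the second paragraph to be the main obstacle, since everything else reduces immediately to Lemmas~\ref{L1} and~\ref{L2}; note also that this family realises every order congruent to $0 \pmod 4$ beginning at $8$, mirroring the chromatic-polynomial construction of Theorem~\ref{T1}.
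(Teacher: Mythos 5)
Your proposal is correct and follows the paper's own route: iterate the hat construction on the seed from Lemma~\ref{L2} and use Lemma~\ref{L1} to propagate $s(G_i)=s(\overline{G_i})$, hence equality of Tutte polynomials, with the distinct orders $8+4i$ giving infinitely many examples. The only place you go beyond the paper is your reconstruction argument (identifying $\{x',y'\}$ as the unique adjacent pair of degree-$2$ vertices) showing that $\widehat{G}\cong\widehat{H}$ forces $G\cong H$; the paper merely asserts that $\widehat{G}$ is not self-complementary, so your argument supplies a justification that the paper leaves implicit.
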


\begin{proof}
By Lemma \ref{L2} there is a non self-complementary graph on $9$ vertices such that $s(G) = s(\overline{G})$ which implies $T_G(x,y) = T_{\overline{G}}(x,y).$ But then, by Lemma \ref{L1} the graph $\widehat{G}$  again has the same subgraph description as its complement and is not self-complementary. Hence applying this operation iteratively on $G$ we end up with an infinite family of graphs possesing the stated property.
\end{proof}

Again as with the chromatic polynomial we can find a graph of order $9$ having the properties of Lemma \ref{L2}. Hence it is possible to show in the same way as we did in the proof of Theorem \ref{TT} that for any $n \geq 8$ congurent to $0,1$ modulo $4$ there exist a non self-complementary graph of order $n$ having the same Tutte polynomial as its complement.

\section{Final remarks}

We were not able to find an example of a graph $G$ with different degree sequence from $\overline{G}$ but same Tutte polynomial. A computer search indicates that such a graph would have to have at least $16$ vertices. Hence we leave the following problem.

\begin{prob}
Find a graph $G$ with different degree sequence than $\overline{G}$ but same Tutte polynomial or show that such a graph does not exists.
\end{prob}

Interestingly the equivalent problem for chromatic polynomials motivated this paper.

\section{Acknowledgements}
We are thankful to thank Sandi Klav\v{z}ar for constructive remarks to Gordon Royle and Georgi Guninski for help with some computational aspects of the problem and to Nejc Trdin for kindly sharing his computational resources. 

\appendix
\section{Sage programs used in the proofs} \label{AppA}

In this appendix we show how to prove the claims of Lemmas \ref{L1}, \ref{L2} using the open source mathematical software Sage \cite{sage}. All examples can be directly copy-pasted into Sage's shell. 

In order to prove  Lemma \ref{L0}, we need to verify that the presented graph has a different degree sequence than its complement but equal chromatic polynomial.
\newline
\begin{python}
sage: G = Graph('HCpVdZY')                                 
sage: Gc = G.complement()                                  
sage: G.degree_sequence() == Gc.degree_sequence()          
False
sage: G.chromatic_polynomial() == Gc.chromatic_polynomial()
True
\end{python}

To check Lemma \ref{L1} we need to first define a function computing the subgraph description of a graph.

\begin{python}
def s(Gr):
    ds = []

    for A in subsets(Gr.edges()):
        G = Graph()
        G.add_vertices(Gr.vertices())
        G.add_edges(A)
        cs = [H.order() for H in G.connected_components_subgraphs()]
        ds.append([len(A)] + sorted(cs)) 

    return sorted(ds)
\end{python}

It is now a matter of a few lines to verify Lemma \ref{L1}.

\begin{python}
sage: G = Graph('GCRdvK')
sage: Gc = G.complement()
sage: G.is_isomorphic(Gc)
False
sage: s(G) == s(Gc)
True
\end{python}

\end{document}